\newtheorem{assumption}{Assumption}
\newtheorem{corollary}{Corollary}
\theoremstyle{thmstyletwo}%
\newtheorem{theorem}{Theorem}
\newtheorem{example}{Example}%
\newtheorem{definition}{Definition}
\numberwithin{equation}{section}
\begin{document}

\DOI{}
\copyrightyear{}
\vol{}
\pubyear{}
\access{Advance Access Publication Date: Day Month Year}
\appnotes{Paper}
\firstpage{1}


\title[On the Stability and Accuracy of Clenshaw-Curtis Collocation]{On the Stability and Accuracy of Clenshaw-Curtis Collocation}

\author{Ahmed Atallah*
\address{\orgdiv{Department of Mechanical and Aerospace Engineering}, \orgname{University of California San Diego},\\ \orgaddress{\street{9500 Gilman Drive, La Jolla}, \postcode{CA 92093}, 
\country{USA}}}}
\author{Ahmad Bani Younes 
\address{\orgdiv{Aerospace Engineering Department}, \orgname{San Diego State University},\\ \orgaddress{\street{5500 Campanile Drive, San Diego}, \postcode{CA 92182}, \country{USA}}}}


\corresp[*]{Corresponding author: \href{email:aatallah@eng.ucsd.edu}{aatallah@eng.ucsd.edu}}



\abstract{
We study the A-stability and accuracy characteristics of  Clenshaw-Curtis collocation. We present closed-form expressions to evaluate the Runge-Kutta coefficients of these methods. From the A-stability study, Clenshaw-Curtis methods are A-stable up to a high number of nodes. High accuracy is another benefit of these methods; numerical experiments demonstrate that they can match the accuracy of the Gauss-Legendre collocation, which has the optimal accuracy order of all Runge-Kutta methods.}
\keywords{ Runge-Kutta; Clenshaw-Curtis; A-stability.}


\maketitle

\section{Introduction}

An initial value problem (IVP) seeks the solution of $y(t)\in \mathbb{R}$ that satisfies the ordinary differential equation (ODE)
\begin{equation}
\label{e:GenFstOde}
\begin{aligned}
    \dot{y}\equiv\frac{\text{d}{y}}{\text{d}t} = f(t,y),
    \end{aligned}
\end{equation}
where $t_0\leq t\leq t_f$ and  $y(t_0)=y_0$. 
 The solution $y(t)$ exists and is unique for all $t\in [t_0,t_f]$ under the following assumption:
 \begin{assumption}
 \label{assump1}
The function $f(t,y)$ is continuous and satisfies a Lipschitz condition on the region $[t_0,t_f]\times\mathbb{R}$, i.e. there exists a constant $L>0$ such that
\begin{equation}|f(t,y)-f(t,x)|\leq L|x-y|
\end{equation}
for all $x,y\in \mathbb{R}$ and for all $t\in[t_0,t_f].$
 \end{assumption}
 
 Numerical methods typically divide the integration interval $[t_0,t_f]$ into a number of steps. A fixed step size, $h$, is assumed, i.e. $h=(t_f-t_0)/N$, for some integer $N\geq 1$ and let $t_1 = t_0+h,t_2=t_0+2h,\hdots, t_N=t_0+Nh=t_f $. We denote by $y_n$ a numerical approximant to the exact solution $y(t_n), n=0,1,\hdots,N$. The exact solution at $t_{n+1}$ can be obtained by integrating from $t_n$ to $t_{n+1}$:
 \begin{equation}
  \label{e:exact}
  \begin{aligned}
     y(t_{n+1}) &= y(t_n)+\int_{t_n}^{t_{n+1}}f(t,y)\text{d} t \\&= y(t_n)+h\int_{0}^{1}f(t_n+t h,y(t_n+ht))\text{d} t 
     \end{aligned}
 \end{equation}
 Runge-Kutta methods replace the integral in \cref{e:exact} by a linear combination of the function $f(t,y)$ evaluated at a number of nodes within the time step; that is, for an $s-$node Runge-Kutta method     \begin{equation}
    \label{e:IRK}
	\begin{aligned}    	
	  y_{n+1}=&y_n+h\sum_{j=1}^{s}b_jf(t_n+c_jh,Y_j),
\\
	Y_i=&y_n+h\sum_{j=1}^{s}a_{ij}f(t_n+c_jh,Y_j) \quad,\quad(i=1,2,...,s).   
	    \end{aligned}
	\end{equation}
The matrix $\mathbf{A}=(a_{ij})_{i,j=1,2,\hdots,s}$ is known as the Runge-Kutta matrix, while 
\begin{equation*}
    \begin{aligned}
    \mathbf{b} = \begin{bmatrix}b_1&b_2& \hdots & b_s
    \end{bmatrix}^T \quad \text{and}\quad 
    \mathbf{c} = \begin{bmatrix}c_1&c_2& \hdots & c_s
    \end{bmatrix}^T
    \end{aligned}
\end{equation*}
are vectors that contain the weights and nodes, respectively. (In this paper, we denote by the Runge-Kutta coefficients of a Runge-Kutta method the elements of its $\mathbf{A}$ matrix and $\mathbf{b}$ and $\mathbf{c}$ vectors.) Each Runge-Kutta method is described by its order of accuracy $p$. That is, if a method of order $p$ implies that $||y(t_1)-y_1||=O(h^{p+1})$ as $h\to 0$. Runge-Kutta methods are also classified according to
whether or not they are A-stable. For A-stable methods, if the continuous-time system in \cref{e:GenFstOde} is linear and asymptotically stable, then the discrete-time system \cref{e:IRK} is also asymptotically stable for all $h>0$; thus, A-stable methods are suitable for stiff problems.   

Runge-Kutta methods generally fall into two categories: explicit or implicit methods.
 A Runge-Kutta method is called explicit if $a_{ij}=0$ when $j\geq i$ and implicit otherwise. 
 Explicit Runge-Kutta (ERK) methods are not A-stable, and no ERK method has order $p>s$. 
 In contrast, some implicit Runge-Kutta (IRK) methods, especially those based on orthogonal polynomials, outperform explicit methods in terms of stability and accuracy. For example, Gauss-Legendre collocation methods, whose nodes are linearly related to the zeros of the orthogonal Legendre polynomials, are A-stable and optimal in the sense that the $s-$node method is of order $2s$. 
 These properties are behind the special attention these methods have received since the inception of IRK methods by Kuntzmann \cite{Kuntzmann1961implicit} and later by Butcher \cite{butcher1964implicit}.
  
 Clenshaw-Curtis collocation, a family of collocation methods whose nodes are based on Chebyshev points \cite{trefethen2008gauss}, is less celebrated than Gauss-Legendre collocation. They are, however, posses several advantages over Gauss-Legendre methods for practical implementations. First, as introduced in this paper, the Runge-Kutta coefficients of the Clenshaw-Curtis collocation can be evaluated using explicit formulas, eliminating the need to precompute and store these coefficients. Second, these methods are adaptive in the sense that the nodes of the $s-$node method are included in the nodes of the $(2s+1)$-node method; thus, if a given accuracy is not achieved for an $s-$node formula, the number of nodes can be increased to $2s+1$ without re-evaluating the ODE at the original $s$ nodes. Third, as Butcher \cite{butcher1992role} points out, the Clenshaw-Curtis collocation allows for accurate extrapolation of the results computed in one step to obtain starting values for the iteration of the nodes` values in the next step.

Clenshaw-Curtis methods are symmetric collocation methods that include the ends of the interval $[0,1]$. Then it follows from \cite{barrio1999characterization} that they are at A-stable when $s\leq 7$. In this paper, it is proved the A-stability of these methods when $s\leq 78$.

Collocation methods have the property that any $s$-node collocation method is at least of order $s$. Vigo-Aguiar and Ramos \cite{vigo2007family} proved that when $s$ is odd, the order of the $s$-node Clenshaw-Curtis method is $s+1$. As a result, these methods fall short of the optimal order attained by Gauss-Legendre collocation methods. Results of numerical tests \cite{woollands2019nonlinear,atallah2020accuracy}, however, show that Clenshaw-Curtis collocation-based integrators are more efficient than the current state-of-the-art integration methods and have accuracy levels comparable with Gauss-Legendre collocation-based integrators. The question that naturally arises is then: What accounts for the high accuracy of Clenshaw-Curtis collocation methods despite their low orders of accuracy? The reason is that the order of numerical methods is not always able to anticipate the actual accuracy of collocation methods. We present numerical examples that show that Clenshaw-Curtis collocation can be as accurate as Gauss-Legendre collocation and more accurate than a family of collocation methods with the same accuracy order, namely Newton-Cotes collocation \cite{shampine1972stable}.

 The remainder of the paper is structured as follows. \Cref{sec:chebysevpolynomials} provides a summary of the properties of Chebyshev polynomials, points, and approximations. \Cref{sec:CC-IRK} is devoted to deriving the explicit formulas for the coefficients of Clenshaw-Curtis collocation.
 \Cref{sec:stab} looks into the stability of Clenshaw-Curtis collocation methods, while \Cref{sec:accuracy} studies their accuracy. Concluding remarks are made in \cref{sec:conclusions}.

\section{Background Material}
\label{sec:chebysevpolynomials}
\subsection{Chebyshev polynomials}
Let $\mathcal{N}=\{0,1,\hdots\}$ or $\mathcal{N}=\{0,1,\hdots,N\}$ be an index set for a finite nonnegative integer $N$, $(a,b)$ be an interval in $\mathbb{R}$, and $\alpha$ denote a positive measure on $(a,b)$. 
\begin{definition} A system of polynomials $\{p_k(\xi)=\sum_{i=0}^k \beta_i \xi^i, i\in \mathcal{N}, \beta_i\neq 0\}$ is called an \textit{orthogonal system of polynomials} over the interval $(a,b)$ with respect to the measure $\alpha$ if the following relations hold: 
\begin{equation}
\label{e:orthog}
\begin{aligned}
    \int_a^b p_k(\xi) p_j(\xi)\text{d} \alpha(\xi)=\gamma_k \delta_{kj},\quad k,j \in \mathcal{N},
    \end{aligned}
\end{equation}
where $\delta_{kj}=0$ if $k\neq j$ and $\delta_{kj}=1$ if $k=j$ and 
\begin{equation*}
\begin{aligned}
    \gamma_k =\int_a^b p_k^2 \text{d} \alpha(\xi),\quad k\in \mathcal{N}.
    \end{aligned}
\end{equation*}
are called the \textit{normalization constants}.
\end{definition}
The measure $\alpha$ usually has a density $w(\xi)$  or is a discrete measure with weight $w_i$ at the point $\xi_i\in (a,b)$. For the former case, the orthogonality relations in \cref{e:orthog} become
\begin{equation}
    \int_{a}^b p_k(\xi) p_j(\xi) w(\xi) \text{d} \xi=\gamma_k \delta_{kj},\quad k,j \in \mathcal{N},
\end{equation}
and for the latter case, become
\begin{equation}
\label{e:discrete_orth}
    \sum_i p_k(\xi_i) p_j(\xi_i) w_i =\gamma_k \delta_{kj},\quad k,j \in \mathcal{N}.
\end{equation}  
For the case of discrete orthogonality, neither the nodes' locations nor the weights are arbitrary but are associated with the particular choice of polynomials to be compatible with the orthogonality conditions of \cref{e:discrete_orth}. Generally, the interior nodes are associated with the zeroes or the extrema of the particular chosen set of orthogonal polynomials.

On the interval $(-1,1)$, Legendre polynomials, $P_k (\xi)$, Chebyshev polynomials of the first kind, $T_k(\xi)$, and Chebyshev polynomials of the second kind, $U_k(\xi)$, are classical orthogonal systems of polynomials with respect to the measures $\text{d} \xi$, ${1}/{\sqrt{1-\xi^2}}\text{d} \xi$, and $\sqrt{1-\xi^2}\text{d} \xi$, respectively. Every orthogonal system of polynomials satisfies three-term recurrence relations. These classical polynomials obey the recurrence relations 
 \begin{equation*}
 \begin{split}
 P_{k+1}(\xi)&= \frac{2k+1}{k+1}\xi P_k(\xi)- \frac{k}{k+1}P_{k-1}(\xi),\\
    T_{k+1}(\xi) &= 2 \xi T_k(\xi) -T_{k-1}(\xi),
     \\
     U_{k+1}(\xi)& = 2 \xi U_k(\xi) -U_{k-1}(\xi),
      \end{split}
 \end{equation*}
 with $P_0=T_0(\xi)=U_0(\xi)=1$, $P_1=T_1(\xi)=\xi$, and $U_1(\xi) = 2\xi$.
 Chebyshev polynomials of the first and second kind can also be defined by the trigonometric relations%
  \begin{equation}
  \label{e:trig}
  \begin{aligned}
  T_k(\xi)=\cos(k\theta),\quad
 U_k(\xi)= \sin\big((k+1)\theta\big)/\sin{\theta}
  \end{aligned}
  \end{equation}
when $\xi=\cos \theta$. From these relations, the following relation can be derived
\begin{equation}
\label{e:firstAndSecondType}
    T'_k(\xi)=kU_{k-1}(\xi).
\end{equation}

An important property of the first-kind Chebyshev polynomials is that when $i\geq 2$, the integral of $T_i(\xi)$ is function of $T_{i+1}$ and $T_{i-1}$:
\begin{equation}
\begin{split}
\int T_k(\xi)=&\frac{1}{2}\bigg (\frac{T_{k+1}(\xi)}{i+1}-\frac{T_{k-1}(\xi)}{k-1}\bigg).
\end{split}
\end{equation}
and
 \begin{equation}
\label{e:integration}
\begin{aligned}
\int T_0(\xi)=&T_1(\xi),&
\int T_1(\xi)=&\frac{1}{4}T_0(\xi)-\frac{1}{4}T_2(\xi).
\end{aligned}
\end{equation}

\subsection{Chebyshev points}
Orthogonal polynomials have the property that if $\{p_k\}$ is a system of orthogonal polynomials over the interval $(a,b)$, all the $k$ zeros of the polynomial $p_k$ are simple and reside in the interval $(a,b)$ \cite{iserles2009first}. The nodes of Gauss-Legendre collocation are the zeros of Legendre polynomials, mapped from the orthogonality interval $(-1,1)$ to the interval $(-1,1)$. These nodes cannot be evaluated via closed-form expressions. In contrast, the trigonometric definition \cref{e:trig} allows deriving explicit formulas for the zeros of Chebyshev polynomials of the first and second kinds. Chebyshev points are the zeros of Chebyshev polynomials of the second kind in addition to the boundary nodes ($-1$ and $+1$). Therefore, for $s\geq 2$, the $s-$set of Chebyshev points are the zeros of the monic polynomial
\begin{equation}
\label{e:zeros-11}
   q_s(\xi) =  \frac{(\xi^2-1)}{2^{s-2}}U_{s-2}(\xi)
\end{equation}
and from \cref{e:firstAndSecondType}
\begin{equation}
\label{e:zeros-12}
   q_s(\xi) =  \frac{(\xi^2-1)}{(s-1)2^{s-2}}T'_{s-1}(\xi).
\end{equation}
In other words, Chebyshev points are the extrema of Chebyshev polynomials of the first kind on the interval $[-1,1]$. From the trigonometric relations in \cref{e:trig}, these points are given by
 \begin{equation}
  \label{e:CGLNodes}
  \begin{aligned}
   \xi_i = -\cos \frac{i-1}{s-1}\pi, \quad(i=1,\hdots,s).
  \end{aligned}
  \end{equation}
  The negative sign in \cref{e:CGLNodes} is arbitrary and not universally chosen, but it offers the heuristic advantage that $i=1$ generates $\xi_1 = -1$, the left end of the boundary and $\xi_s=1$ is the right end of the (-1,1).
  
  Chebyshev polynomials of the first kind are discrete orthogonal for Chebyshev points, with the discrete orthogonality relations being
   \begin{equation}
  \label{e:innerProduct}
  \begin{aligned}
  \sum_{i=1}^{s} {}^{''} T_{k-1}( {\xi}_i)T_{j-1}( {\xi}_i)
  =
   \begin{cases}
  0,\quad\text{if }k \neq j, \\
  s-1,\quad \text{if }k=j=1, \\
 (s-1)/{2},\quad \text{if }k=2=1,2,...,s-1, \\
  s-1,\quad \text{if }k=j=s.
  \end{cases} 
  \end{aligned}
  \end{equation}
 (The ${}^{''}$ on the summation implies that both the first and last terms in the summation are halved.) Therefore, as we shall see in the following subsection, there is no need for matrix inversion to obtain the corresponding coefficients when used to approximate a function.

 At the boundary points $\pm1$The $j$th derivative of the $s$th Chebyshev polynomial is given by
 \begin{equation}
 \label{e:derivatives}
    T^{(j)}_{s}(\pm 1)=(\pm 1)^{s+j}\prod_{k=0}^{j-1}\frac{s^2-k^2}{2k+1}.
\end{equation}

\subsection{Chebyshev Approximations}
\label{subsec:approx}
A Lipschitz continuous function defined on the interval $[-1,1]$ has an absolutely convergent Chebyshev series \cite{trefethen2017multivariate}, i.e.,
  \begin{equation}
  \label{e:ChebSeries}
      f(\xi) = \sum_{k=1}^{\infty} {}^{'}  \alpha_{k-1} T_{k-1}(\xi).
  \end{equation}
  (The ${}^{'}$ on the summation means the first term of the series is halved.) Due to the orthogonality of Chebyshev polynomials with respect to the measure $1/{\sqrt{1-\xi^2}}\text{d}\xi$, the coefficients ($\alpha_i$) of this series are then given by
  \begin{equation}
  \label{e:coefficients}
  \begin{aligned}
      \alpha_k = 
        \frac{2}{\pi}\int_{-1}^1 T_{k}(\xi) f(\xi)\frac{1}{\sqrt{1-\xi^2}}\text{d}\xi.
      \end{aligned}
  \end{equation}
  The {Chebyshev projection}, $S_s(\xi)$, is a polynomial approximant resulting from truncating the Chebyshev series in \cref{e:ChebSeries} at the $s$th term \cite{townsend2014computing}, i.e.,
  \begin{equation}
      S_s(\xi) = \sum_{i=1}^{s} {}^{'} \alpha_{k-1} T_{k-1}(\xi).
  \end{equation}
  
  To avoid evaluating the integral in \cref{e:coefficients}, another polynomial approximation of the function $f$, known as Chebyshev interpolant, is obtained by interpolation in Chebyshev points. The Chebyshev interpolant with $s$ Chebyshev points, $f_s(\xi)$, is given by 
  \begin{equation}
      f_s(\xi) = \sum_{k=1}^s {}^{''} \beta_{k-1} T_{k-1}(\xi),
  \end{equation}
  where the coefficients $\{\beta_i\}$ satisfy the discrete orthogonality conditions of \cref{e:innerProduct}; therefore 
  \begin{equation}
  \label{e:coeffs}
     \beta_k = \frac{2}{s-1}\sum_{j=1}^{s} {}^{''} f ( {\xi}_j)T_{k}( {\xi}_j).
  \end{equation}
  A primary advantage of the interpolation in Chebyshev points is that the interpolation error is within a factor of 2 of the truncation error \cite{trefethen2019approximation}. This fact, combined with the absolute convergence of the Chebyshev series, leads to the conclusion that the interpolation error, $f-S_s=\sum_{i=s+1}^\infty \alpha_i T_i(\xi)$, converges uniformly and absolutely to $0$. In addition, the smoother the function is, the faster the interpolation error converges as $s \to \infty$. In particular, if the function $f$ is analytic, the interpolation errors converge to 0 geometrically \cite{Boyd2001}.
  
The convergence properties of Chebyshev interpolants are inherited to integrating over a bounded interval while using a Chebyshev interpolant to approximate the integrand. The family of methods that results when $f(t,y) = f(t)$ is known as Clenshaw-Curtis quadrature. Thus Clenshaw-Curtis collocation is the name we have given to the family of collocation methods that employs Chebyshev interpolants to approximate the integrand for general ODEs.


 \section{Construction of the methods}
\label{sec:CC-IRK} 
\begin{definition}
\label{def:collocation}
For $s$ a positive integer and $c_1,\hdots,c_s$ distinct real numbers (typically between 0 and 1), the \textit{collocation method} consists of finding the polynomial $u(t_n+\tau h)$ of degree $s$ such that 
\begin{subequations}
\label{e:conditions}
\begin{equation}
\label{e:conditions1}
\begin{split}
    u(t_n)&=y_n,
    \end{split}
    \end{equation}
    \begin{equation}
    \label{e:conditions2}
    \begin{aligned}
    \dot{u}(t_n+c_ih) = f(t_n+c_ih,u(t_n+c_ih)),\quad i=1,\hdots,s.
    \end{aligned}
  \end{equation}
\end{subequations}
The numerical solution at $t_{n+1}$ is given by
\begin{equation}
    y_{n+1} = u(t_n+h).
\end{equation}
\end{definition}
Collocation methods are IRK methods; thus, they take the form \ref{e:IRK}. For a sequence of numbers $\{c_i\}_{i=1}^s\in [0,1
    ]$, the coefficients $\{a_{ij}\}$ and $\{b_j\}$ of the resulting collocation method are typically given by 
     \begin{equation}
    \begin{aligned}
        a_{ij}= \int_{0}^{c_i}l_j(t) \text{d}t,\quad b_j =\int_0^1 l_j(t) \text{d}t\quad i,j=1,\hdots,s,
        \end{aligned}
    \end{equation}
    where the $l_j(t)$ are the Lagrange polynomials
    \begin{equation}
        l_j(t) = \prod_{k\neq j}\frac{t-c_k}{c_j-c_k}.
    \end{equation}
Therefore, explicit formulas for the RK coefficients are not generally available. An exception is the coefficients of Clenshaw-Curtis collocation methods. 
	\begin{theorem}
	 For the $s-$node Clenshaw-Curtis collocation method, the RK coefficients are given by 
	 \begin{subequations}
	    \begin{equation}
    \label{e:coeffs2}
    \begin{split}c_i& = \frac{1}{2}(1-\cos ((i-1)\pi/(s-1))
    \end{split}
    \end{equation}
     \begin{equation}
        a_{ij}=\frac{1}{2-\delta_{1j}-\delta_{sj}} \frac{1}{2(s-1)}\sum_{k=1}^s{}^{''}\cos (-(k-1)(j-1)\pi/(s-1))I_{i,k-1}
        \end{equation}
        \begin{equation}
        \begin{split}
       b_j = a_{sj}  
    \quad j =1,2,..,s
    \end{split}
    \end{equation}
    \end{subequations}
     where 
     \begin{equation*}
	\begin{aligned}
			I_{i0}=2 c_i,\quad I_{i1}=\frac{1}{4}\bigg(\cos \frac{-2(i-1)\pi}{s-1}-1\bigg) 
	    \end{aligned}
	    \end{equation*}
     and\text{ }for $k> 1$,
    \begin{equation*}
	    I_{ik}=
	    \frac{1}{2(k+1)}\bigg(\cos \frac{-(k+1)(i-1)\pi}{s-1}+(-1)^k\bigg)-\frac{1}{2(k-1)}\bigg(\cos \frac{-(k-1)(i-1)\pi}{s-1}+(-1)^k\bigg).
	\end{equation*}
     	\end{theorem}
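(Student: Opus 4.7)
The plan is to start from the collocation definition $a_{ij}=\int_0^{c_i}\ell_j(\tau)\,d\tau$ and $b_j=\int_0^{1}\ell_j(\tau)\,d\tau$, but to avoid the intractable Lagrange basis by expressing each $\ell_j$ in the Chebyshev basis. The crucial observation is that $\ell_j$ is a polynomial of degree at most $s-1$ satisfying $\ell_j(c_k)=\delta_{jk}$, so it coincides identically with its own $s$-point Chebyshev interpolant on $[0,1]$. After the affine substitution $\xi=2\tau-1$ (which maps the collocation nodes $c_i$ to the Chebyshev points $\xi_i$), the formulas of \cref{subsec:approx} give
\begin{equation*}
\ell_j(\tau)=\sum_{m=1}^{s}{}^{''}\beta_{m-1}^{(j)}\,T_{m-1}(2\tau-1).
\end{equation*}

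Next I would compute the Chebyshev coefficients using \cref{e:coeffs}. Substituting $\ell_j(c_k)=\delta_{jk}$ collapses the inner sum to the single surviving term $k=j$, yielding
\begin{equation*}
\beta_{m-1}^{(j)}=\frac{2\,w_j}{s-1}\,T_{m-1}(\xi_j),
\end{equation*}
where the weight $w_j=\tfrac12$ for $j\in\{1,s\}$ and $w_j=1$ otherwise. This weight, carried forward, is exactly what produces the factor $1/(2-\delta_{1j}-\delta_{sj})$ in the statement. Invoking the trigonometric identity \cref{e:trig} together with $\xi_j=-\cos((j-1)\pi/(s-1))$ then rewrites $T_{m-1}(\xi_j)$ as the cosine that appears in the theorem, and the formula for $c_i$ is immediate from the definition of the (shifted) Chebyshev points.

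The final step is termwise integration:
\begin{equation*}
a_{ij}=\tfrac12\sum_{m=1}^{s}{}^{''}\beta_{m-1}^{(j)}\int_{-1}^{\xi_i}T_{m-1}(u)\,du,
\end{equation*}
where the $\tfrac12$ comes from the change of variables. Denoting the inner integral by $I_{i,m-1}$, the cases $m-1=0,1$ are handled directly from $\int T_0=T_1$ and the elementary antiderivative of $T_1$; for $m-1\geq 2$ the antiderivative formula \cref{e:integration} gives a closed form in $T_{m},T_{m-2}$, which is then evaluated at the endpoints. The lower endpoint $u=-1$ produces the $(-1)^k$ contributions via $T_n(-1)=(-1)^n$, while the upper endpoint gives the cosine terms at $\xi_i$. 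Finally, $b_j=a_{sj}$ follows from $c_s=1$, since $a_{sj}=\int_0^{c_s}\ell_j=\int_0^1\ell_j=b_j$.

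The main obstacle I expect is bookkeeping rather than insight: reconciling the two layers of halved-endpoint summations (the $''$ in the interpolant and the $''$ in the coefficient formula) so that the overall prefactor lands correctly as $1/[(2-\delta_{1j}-\delta_{sj})\cdot 2(s-1)]$, and keeping the sign conventions consistent given that the paper uses $\xi_i=-\cos((i-1)\pi/(s-1))$ rather than the more common $+\cos$ convention. Once these conventions are pinned down, the remaining algebra is routine applications of \cref{e:integration} and basic values of $T_n$ at $\pm 1$.
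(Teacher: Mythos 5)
Your route is in substance the paper's own proof, repackaged: the paper writes $\dot u$ as the Chebyshev interpolant of the stage values $f(t_n+c_jh,u(t_n+c_jh))$ and integrates termwise, while you expand each Lagrange cardinal function $\ell_j$ in the Chebyshev basis through the same discrete-orthogonality formula \cref{e:coeffs} (using that $\deg \ell_j\le s-1$ makes the interpolant exact) and then integrate. The coefficient of $f(t_n+c_jh,\cdot)$ in the paper's expression is precisely your Chebyshev expansion of $\ell_j$, so the two computations coincide term by term, down to the reduction to $I_{i,k}=\int_{-1}^{\xi_i}T_{k}(\xi)\,\mathrm{d}\xi$ via \cref{e:integration}; your version has the mild advantage of starting from the standard characterization $a_{ij}=\int_0^{c_i}\ell_j$, $b_j=\int_0^1\ell_j$ rather than rebuilding the Runge--Kutta form from the collocation conditions.

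One step will not come out as you claim, and it is exactly the bookkeeping you flagged as the crux. The halved endpoint weight extracted from \cref{e:coeffs} is $w_j=\tfrac12$ for $j\in\{1,s\}$ and $w_j=1$ otherwise, i.e.\ $w_j=(2-\delta_{1j}-\delta_{sj})/2$; combined with the Jacobian $\tfrac12$ and the factor $2/(s-1)$ this gives the prefactor $(2-\delta_{1j}-\delta_{sj})/\bigl(2(s-1)\bigr)$, which agrees with the printed $1/\bigl[(2-\delta_{1j}-\delta_{sj})\,2(s-1)\bigr]$ only for the endpoint columns $j\in\{1,s\}$ and differs by a factor $4$ for interior $j$. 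Your bookkeeping is the correct one: for $s=3$ it yields $b_2=a_{32}=2/3$, the Lobatto/Simpson weight, whereas the printed constant gives $1/8$. Relatedly, the printed factors $\cos\bigl(-(k-1)(j-1)\pi/(s-1)\bigr)$ and the $(-1)^k$ terms in $I_{ik}$ are written as if $\xi_i=+\cos\bigl((i-1)\pi/(s-1)\bigr)$, while the printed $c_i$ and $b_j=a_{sj}$ follow the minus-sign convention \cref{e:CGLNodes}, under which $T_m(\xi_j)=(-1)^m\cos\bigl(m(j-1)\pi/(s-1)\bigr)$; so keep one consistent convention and do not force your signs to match the statement literally. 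None of this undermines your plan---the paper's own proof is loose at the same points (it restates \cref{e:coeffs} with $1/(s-1)$ in place of $2/(s-1)$ and writes $1/(1-\delta_{1j}-\delta_{sj})$ before the final formula)---but your writeup should present the corrected constants rather than assert that the weight reproduces the printed factor.
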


\begin{proof}
The nodes of Clenshaw-Curtis collocation are Chebyshev points, transplanted from $[-1,1]$ to $[0,1]$, i.e.,
\begin{equation}
     c_i = \frac{1}{2}(1+\xi_i). 
\end{equation}
To satisfy the condition in \cref{e:conditions}, let $u$ be a polynomial of degree $s$ over the interval $[t_n,t_n+h]$ such that $u(t_n) = y_n$ and its derivative is a Chebyshev interpolant of degree $s-1$, i.e., 
\begin{equation}
\label{e:derive_u}
    \dot{u}(t_n+h(1+\xi)/2)= \sum_{k=1}^s {}^{''}\beta_{k-1}T_{k-1}(\xi)
\end{equation}
From \cref{e:coeffs},
\begin{equation}
\label{e:beta_eqn}
    \beta_k = \frac{1}{s-1}\sum_{j=1}^s{}^{''} f(t_n+c_jh,u(t_n+c_jh))T_{k}(\xi_j).
\end{equation}
Having satisfied \ref{e:conditions1} and \ref{e:conditions2}, then 
\begin{equation}
\label{e:yn+1}
    y_{n+1} =  u(t_n+h) = y_n+h\int_{0}^{c_i}\dot{u}(t_n+\tau h) \text{d} \tau= y_n+h/2\int_{-1}^{\xi_i} \dot{u}(t_n+\ h/2(1+\xi)) \text{d} \xi.
\end{equation}
Substituting \cref{e:derive_u} and \cref{e:beta_eqn} in \cref{e:yn+1} gives 
\begin{equation}
\label{e:yn+2}
    y_{n+1} = 
    y_n+h\sum_{j=1}^s{}^{''}f(t_n+c_jh,u(t_n+c_jh))\frac{1}{2(s-1)}\sum_{k=1}^s{}^{''}{T}_{k-1}(\xi_j)\int_{-1}^{1} {T}_{k-1}(\xi) \text{d} \xi.
\end{equation}
The values of the polynomial $u$ at the collocation nodes $u(t_n+c_ih)$, $i=1,\hdots, s$ are needed to evaluate $y_{t_n+1}$. They are given by
\begin{equation}
\begin{split}
    u(t_n+c_ih)&= y_n+h\int_{0}^{c_i}\dot{u}(t_n+\tau h) \text{d} \tau= y_n+h/2\int_{-1}^{\xi_i} \dot{u}(t_n+\ h/2(1+\xi)) \text{d} \xi.
    \end{split}
\end{equation}
Substituting \cref{e:derive_u} and \cref{e:beta_eqn} gives 
\begin{equation}
    \begin{split}
    u(t_n+c_ih)&= y_n+h\sum_{j=1}^s{}^{''}f(t_n+c_jh,u(t_n+c_jh))\frac{1}{2(s-1)}\sum_{k=1}^s{}^{''}{T}_{k-1}(\xi_j)\int_{-1}^{\xi_i} {T}_{k-1}(\xi) \text{d} \xi
    \end{split}
    \end{equation}
    By letting 
    \begin{equation}
     a_{ij}=\frac{1}{1-\delta_{1j}-\delta_{sj}} \frac{1}{2(s-1)}\sum_{k=1}^s{T}_{k-1}(\xi_j)\int_{-1}^{\xi_i} {T}_{k-1}(\xi) \text{d} \xi,
\end{equation}

    \begin{equation}
    \begin{split}
  Y_i= u(t_n+c_ih) = y_n+h\sum_{j=1}^sa_{ij}f(t_n+c_jh,u(t_n+c_jh)).
    \end{split}
\end{equation}
\cref{e:coeffs2} follows from \cref{e:trig}, \cref{e:CGLNodes}, and the integration property in \cref{e:integration}. Since $c_s=1$, $b_j= a_{sj}$.
	\end{proof}

  \section{Stability of the methods}
  \label{sec:stab}
  A standard problem to study the stability of numerical methods is the linear ODE
  \begin{equation}
  \label{e:linear}
  \begin{aligned}
      \dot{y} = \lambda y,\quad \lambda \in \mathbb{C}.      \end{aligned}
  \end{equation}
  The continuous-time system in \cref{e:linear} is asymptotically stable, i.e. $\lim_{t \to  \infty}y(t) =0$, if only if $Re(\lambda)<0$. The domain of linear stability of a numerical method is the set of all $z=h\lambda \in \mathbb{C}$ such that $\lim_{n \to \infty} y_n =0$. A numerical  method is said to be A-stable if its domain of linear stability satisfies
 \begin{equation}
     \mathbb{C}^{-}:= \{z\in \mathbb{C}|Re(z) < 0 \}\subseteq D,  \end{equation}
 and $A_0$ if $D$ satisfies
  \begin{equation}
     \mathbb{R}^{-}:= \{z\in \mathbb{R} | z < 0 \}\subseteq D.
 \end{equation}
 
 Applying a Runge-Kutta method to \cref{e:linear} results in the discrete-time system given by
 \begin{equation}
  \label{e:linear_discrete}
      y_{n+1}= r(h\lambda)y_n.
  \end{equation}
  Here $r(z)$ is the stability function defined by
    \begin{equation}
    \label{e:stability_function}
     r(z)=[1+z\pmb{b}^T(I-z A)^{-1}\pmb{1}].
  \end{equation}
  where $\pmb{1}= [1,1,\hdots,1]^T$. Since the necessary and sufficient condition for $\lim_{n \to \infty} y_n =0$ in \cref{e:linear_discrete} is that $|r(z)|<1$, we can make the following definition. 
 
\begin{definition}
\label{def:stability}
A Runge-Kutta method is called A-stable if and only if the stability function $r(z)$ satisfies 
 \begin{equation}
 \label{e:stability}
     |r(z)|<1.
 \end{equation}
 for all $z\in \mathbb{C}$ and $\text{Re}(z)<0$ and if the stability function satisfies the inequality in \cref{e:stability} when $z\in \mathbb{R}$ and $z<0$, the method is called A$_0-$stable.
\end{definition} 

For explicit Runge-Kutta methods, the stability function $r(z)$ is a polynomial; therefore, the condition of A-stability cannot be satisfied. In contrast, since the stability function is rational, some implicit Runge-Kutta methods are A-stable. For collocation methods, it is given by the following theorem.
\begin{theorem}[\cite{wanner1996solving}] The stability function of the collocation method based on the points $c_1,c_2,\hdots,c_s$ is given by
\begin{equation}
    \label{e:stability2}
    r(z) =N_s(z)/D_s(z)
    \end{equation}
where $D_s(z)$ and $N_s(z)$ are given by
\begin{equation}
    \begin{split}
        D_s(z) = \sum_{j=0}^s M^{(s-j)}(0)z^{j},\quad
        N_S(z) = \sum_{j=0}^s M^{(s-j)}(1)z^{j},
    \end{split}
\end{equation}
with
 \begin{equation}
 \label{e:zeros01}
     M_s(\tau) = \frac{1}{s!}\prod_{i=1}^s(\tau-c_i).  
 \end{equation}
  \end{theorem}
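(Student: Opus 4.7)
The plan is to apply the collocation method directly to the Dahlquist test equation $\dot y = \lambda y$ and track what determines $y_{n+1}$ in closed form. Write $z=h\lambda$ and let $q(\tau) := u(t_n+\tau h)$, so that $q$ is a polynomial of degree $s$ in $\tau \in [0,1]$ with $q(0)=y_n$ and $y_{n+1}=q(1)$. The collocation condition $\dot u(t_n+c_ih)=\lambda u(t_n+c_ih)$ rewrites as
\begin{equation*}
q'(c_i) - z\,q(c_i) = 0, \qquad i=1,\ldots,s.
\end{equation*}

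The key algebraic observation is that $P(\tau) := q'(\tau) - z\,q(\tau)$ is a polynomial in $\tau$ of degree $\le s$ vanishing at the $s$ distinct nodes $c_1,\ldots,c_s$, so $P(\tau) = C\cdot s!\cdot M_s(\tau)$ for some scalar $C$ (depending on $z$ and $y_n$). Hence $q$ satisfies the linear first-order ODE
\begin{equation*}
q'(\tau) - z\,q(\tau) = C\cdot s!\cdot M_s(\tau), \qquad q(0)=y_n,
\end{equation*}
whose variation-of-parameters solution is $q(\tau)=e^{z\tau}y_n + C\,s!\,e^{z\tau}\!\int_0^\tau e^{-z\sigma}M_s(\sigma)\,d\sigma$. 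I would then evaluate the integral by repeated integration by parts against the exponential, using the fact that $M_s^{(s+1)}\equiv 0$, to obtain
\begin{equation*}
\int_0^\tau e^{-z\sigma}M_s(\sigma)\,d\sigma \;=\; \sum_{k=0}^{s}\frac{M_s^{(k)}(0)-M_s^{(k)}(\tau)e^{-z\tau}}{z^{k+1}}.
\end{equation*}

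Substituting back gives
\begin{equation*}
q(\tau) \;=\; e^{z\tau}\!\left[y_n + C\,s!\sum_{k=0}^{s}\frac{M_s^{(k)}(0)}{z^{k+1}}\right] - C\,s!\sum_{k=0}^{s}\frac{M_s^{(k)}(\tau)}{z^{k+1}}.
\end{equation*}
Since $q$ is a polynomial, the coefficient of $e^{z\tau}$ must vanish; this algebraic consistency condition pins down $C$. Clearing denominators via the substitution $j=s-k$ turns $\sum_{k=0}^{s}M_s^{(k)}(0)/z^{k+1}$ into $D_s(z)/z^{s+1}$, yielding $C=-y_n\,z^{s+1}/(s!\,D_s(z))$. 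Setting $\tau=1$ in the remaining polynomial part and applying the same index shift to $\sum_k M_s^{(k)}(1)/z^{k+1}$ recovers $N_s(z)$, so that $y_{n+1}=q(1) = \bigl(N_s(z)/D_s(z)\bigr) y_n$, which is exactly $r(z)=N_s(z)/D_s(z)$ by the definition in \cref{e:linear_discrete}.

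The only nontrivial step is the integration-by-parts identity; the main obstacle is purely bookkeeping, namely, correctly tracking the signs and the telescoping indices so that the $s$-fold IBP terminates cleanly at the constant polynomial $M_s^{(s)}\equiv 1$. Once that identity is established, the derivation of $r(z)$ reduces to recognizing that the polynomial-consistency condition on $q$ is precisely the condition that the coefficient of $e^{z\tau}$, viewed as a polynomial in $1/z$, equals $-y_n$ times $D_s(z)/z^{s+1}$, with the numerator polynomial $N_s(z)$ arising symmetrically from the endpoint $\tau=1$.
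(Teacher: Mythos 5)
The paper itself gives no proof of this theorem---it is quoted from Hairer and Wanner \cite{wanner1996solving}---so there is no internal argument to compare against; judged on its own, your proof is correct and is essentially the classical derivation. The chain of steps checks out: with $q(\tau)=u(t_n+\tau h)$ the collocation conditions do become $q'(c_i)-zq(c_i)=0$, the polynomial $q'-zq$ of degree $\le s$ vanishing at the $s$ distinct nodes is indeed a scalar multiple of $s!\,M_s$, and your integration-by-parts identity is valid, since differentiating $-e^{-z\tau}\sum_{k=0}^{s}M_s^{(k)}(\tau)/z^{k+1}$ telescopes to $e^{-z\tau}M_s(\tau)$ because $M_s^{(s+1)}\equiv 0$. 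The only points worth making explicit are that the ``coefficient of $e^{z\tau}$ must vanish'' step needs $z\neq 0$ (the case $z=0$ is trivial, as $r(0)=1=M_s^{(s)}(1)/M_s^{(s)}(0)$), and that the final division presumes $D_s(z)\neq 0$, which holds for all but finitely many $z$ and is harmless since $r$ is a rational function; with those remarks your index shift $j=s-k$ delivers exactly $D_s$ and $N_s$ as defined in \cref{e:stability2}, and $y_{n+1}=\bigl(N_s(z)/D_s(z)\bigr)y_n$ as required by \cref{e:linear_discrete}.
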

 Employing this theorem, the following corollary gives the stability function for Clenshaw-Curtis collocation.
  \begin{corollary}
  The polynomials $N_S(z)$ and $D_S(z)$ in \cref{e:stability2} for Clenshaw-Curtis collocation are given by
\begin{equation}
    \begin{split}
        D_S(z) = \sum_{j=0}^{s-1} (-1)^{s-j}d_jz^{j},\quad
        N_S(z) = \sum_{j=0}^{s-1} d_jz^{j},
    \end{split}
\end{equation}
  where $d_j\geq 0$ and are defined by
  \begin{equation}
  \label{e:d_j}
   d_j = \frac{2^j(s-j)}{2^{2s-2}(s-1)s!} \bigg((s-j-1)\prod_{k=0}^{s-j-1}\frac{(s-1)^2-k^2}{2k+1}+2\prod_{k=0}^{s-j}\frac{(s-1)^2-k^2}{2k+1}\bigg)
  \end{equation}
  \end{corollary}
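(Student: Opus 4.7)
The plan is to start from the formulas for $D_s(z)$ and $N_s(z)$ in the previous theorem and evaluate $M_s^{(s-j)}(0)$ and $M_s^{(s-j)}(1)$ explicitly for the Clenshaw--Curtis nodes. Writing $c_i=(1+\xi_i)/2$ and changing variables to $\sigma=2\tau-1$, one has
\begin{equation*}
M_s(\tau) \;=\; \frac{1}{s!}\prod_{i=1}^{s}(\tau-c_i) \;=\; \frac{1}{2^{s}\,s!}\prod_{i=1}^{s}(\sigma-\xi_i) \;=\; \frac{1}{2^{s}\,s!}\,q_s(\sigma),
\end{equation*}
and substituting the identity (\ref{e:zeros-12}) turns this into
\begin{equation*}
M_s(\tau) \;=\; \frac{(\sigma^{2}-1)\,T'_{s-1}(\sigma)}{2^{2s-2}(s-1)\,s!},\qquad \sigma=2\tau-1.
\end{equation*}
This is the key reduction: once $M_s$ is expressed as $(\sigma^2-1)$ times a derivative of a Chebyshev polynomial, all boundary evaluations reduce to formulas for $T_{s-1}^{(j)}(\pm 1)$, which are provided by (\ref{e:derivatives}).

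Next I would use the chain rule $M_s^{(k)}(\tau)=2^{k}\tilde M^{(k)}(\sigma)$ (where $\tilde M$ denotes the $\sigma$-expression above), so that the required quantities become $M_s^{(s-j)}(0)=2^{s-j}\tilde M^{(s-j)}(-1)$ and $M_s^{(s-j)}(1)=2^{s-j}\tilde M^{(s-j)}(1)$. Applying the Leibniz rule to the product $(\sigma^{2}-1)\,T'_{s-1}(\sigma)$ and noting that only the $m=1,2$ terms survive at $\sigma=\pm 1$ (since $\sigma^{2}-1$ vanishes there and has no derivatives beyond second order), one obtains at $\sigma=1$
\begin{equation*}
\tilde M^{(s-j)}(1) \;=\; \frac{1}{2^{2s-2}(s-1)\,s!}\Bigl[2(s-j)\,T_{s-1}^{(s-j)}(1)+(s-j)(s-j-1)\,T_{s-1}^{(s-j-1)}(1)\Bigr].
\end{equation*}
Substituting $T_{s-1}^{(j)}(1)=\prod_{k=0}^{j-1}\tfrac{(s-1)^{2}-k^{2}}{2k+1}$ then produces an expression of exactly the form claimed for $d_j$ in (\ref{e:d_j}). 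Non-negativity of $d_j$ follows immediately because every factor $(s-1)^{2}-k^{2}$ with $0\le k\le s-2$ is non-negative, and the sum reduces to zero naturally when the range of indices forces a factor $k=s-1$ into the product.

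For the $D_s$ coefficients, rather than redo the Leibniz computation at $\sigma=-1$, I would exploit the symmetry $c_i+c_{s+1-i}=1$, which gives $M_s(1-\tau)=(-1)^{s}M_s(\tau)$. Differentiating $s-j$ times and evaluating at $\tau=0$ yields $M_s^{(s-j)}(0)=(-1)^{s-j}\,M_s^{(s-j)}(1)$ (so the coefficient of $z^{j}$ in $D_s$ is $(-1)^{s-j}d_j$), matching the claim. This is cleaner than verifying the sign by direct Leibniz expansion, where one would have to track the factors $(-1)^{s-1+j}$ from $T_{s-1}^{(j)}(-1)$ and the sign flip of $2\sigma$ at $\sigma=-1$; these conspire to produce the same relation but less transparently.

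The main obstacle I anticipate is bookkeeping rather than any deep step: keeping the Leibniz expansion organised so that the two surviving terms line up against the two products appearing in (\ref{e:d_j}), and verifying that the prefactor $\frac{2^{j}(s-j)}{2^{2s-2}(s-1)\,s!}$ emerges correctly after multiplying by the chain-rule factor $2^{s-j}$ and combining with $\frac{1}{2^{2s-2}(s-1)s!}$. A secondary point is to check that the upper limit of summation in $D_s$ and $N_s$ drops from $s$ to $s-1$, which holds because $M_s(0)=M_s(1)=0$ (since $c_1=0$ and $c_s=1$ are nodes), eliminating the $j=s$ term in both polynomials.
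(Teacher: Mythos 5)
Your route is essentially the paper's: both express $M_s(\tau)$ through $(\sigma^2-1)T'_{s-1}(\sigma)$ with $\sigma=2\tau-1$, pick up the chain-rule factor $2^{s-j}$, apply Leibniz so that only the two terms with $2\sigma$ and $2$ survive at $\sigma=\pm1$, and then insert the boundary-derivative formula for $T^{(j)}_{s-1}(\pm1)$; your observation that the $j=s$ term drops because $c_1=0$, $c_s=1$ are nodes is implicit in the paper but worth stating. The one genuine difference is how you relate $M_s^{(s-j)}(0)$ to $M_s^{(s-j)}(1)$: the paper evaluates the Leibniz expansion again at $\sigma=-1$ and uses the sign factor $(\pm1)^{s+j}$ in the derivative formula, whereas you invoke the node symmetry $c_i+c_{s+1-i}=1$, i.e.\ $M_s(1-\tau)=(-1)^sM_s(\tau)$, which is cleaner. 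However, carry that symmetry argument through carefully: differentiating $n=s-j$ times and setting $\tau=1$ gives $M_s^{(s-j)}(0)=(-1)^{\,j}M_s^{(s-j)}(1)$, not $(-1)^{s-j}$ as you assert; the same $(-1)^j$ is what the paper's own proof obtains in its final display for $D_s(z)$ (the $(-1)^{s-j}$ in the corollary statement differs by an overall factor $(-1)^s$ and is evidently a misprint, as one also sees from $r(0)=1$ requiring $D_s(0)=d_0$). Likewise, your Leibniz computation actually yields the prefactor $2^{s-j}$ and products running to $s-j-2$ and $s-j-1$, which again matches the paper's derivation rather than the printed coefficient formula (whose $2^{j}$ and product limits appear to be index slips); saying the result is ``exactly the form claimed'' papers over this, so state explicitly what your computation gives and note the discrepancy rather than conforming to the typo.
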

 \begin{proof}
  From \cref{e:zeros-11,e:zeros01,e:zeros01} 
  \begin{equation}
    M_s((\xi+1)/2) = \frac{1}{2^ss!}q_s(\xi) = \frac{1}{2^s} \frac{1}{2^{s-2}(s-1)} (\xi^2-1){T'_{s-1}(\xi)}.
    \end{equation}
    The $j$th derivative of $M_s$ is then given by
      \begin{equation}
      \begin{split}
    M_s^{(j)}((\xi+1)/2) = &\frac{2^j}{2^{2s-2}(s-1)s!} \bigg(j(j-1)T_{s-1}^{(j-1)}(\xi)+\\&2j\xi T_{s-1}^{(j)}(\xi)+(\xi^2-1)T^{(j+1)}_{s-1}(\xi)\bigg).
    \end{split}
\end{equation}
At $\tau=1$
 \begin{equation}
  \begin{aligned}
  \label{e:M_sj}
    M_s^{(j)}(1) = \frac{2^j}{2^{2s-2}(s-1)} \bigg(j(j-1)T^{(j-1)}_{s-1}(1)+2jT_{s-1}^{(j)}(1)\bigg)\\
        \end{aligned}
\end{equation}
and at $\tau=0$
 \begin{equation}
  \begin{aligned}
      M_s^{(j)}(0) = \frac{2^j}{2^{2s-1}(s-1)} \bigg(j(j-1)T^{(j-1)}_{s-1}(-1)-2jT_{s-1}^{(j)}(-1)\bigg)
    \end{aligned}
\end{equation}
From \cref{e:derivatives}
 \begin{equation}
  \begin{aligned}
      M_s^{(j)}(0) = (-1)^{s-j}M_s^{(j)}(1).
    \end{aligned}
\end{equation}
Let 
\begin{equation}
\label{e:d}
   d_j= M^{(s-j)}(1).
\end{equation}
Then the polynomials in $D_s(z)$ and $N_S(z)$ in \cref{e:stability2}  are given by
\begin{equation}
    \begin{split}
        D_s(z) = \sum_{j=0}^{s-1} (-1)^{j}d_jz^{j},\quad
        N_S(z) = \sum_{j=0}^{s-1} d_jz^{j},
    \end{split}
\end{equation}
Substituting \cref{e:derivatives,e:M_sj} in \cref{e:d} gives \cref{e:d_j}.
   \end{proof}
A direct consequence of this corollary is to prove the A$_0$-stability for Clenshaw-Curtis collocation. 
\begin{theorem}\label{thm:stability}
  Clenshaw-Curtis collocation methods are A$_0$-stable. 
\end{theorem}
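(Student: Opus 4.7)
The plan is to exploit the sign structure of the coefficients $d_j$ furnished by the preceding corollary and reduce A$_0$-stability to a positivity property of certain partial sums of the $d_j$.

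First, I would set $z=-x$ with $x>0$ and substitute into $N_s$ and $D_s$. Because every term of $D_s$ acquires the common sign $(-1)^{s-j}(-1)^j=(-1)^s$,
\begin{equation*}
D_s(-x)=(-1)^s\sum_{j=0}^{s-1}d_jx^j,\qquad N_s(-x)=\sum_{j=0}^{s-1}(-1)^jd_jx^j.
\end{equation*}
Since the leading coefficient $d_{s-1}$ is strictly positive, the denominator does not vanish on $(-\infty,0)$, so $r$ is genuinely rational there and $|D_s(-x)|=\sum_{j=0}^{s-1}d_jx^j$.

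Next I would form the key algebraic identity
\begin{equation*}
|D_s(-x)|^2-|N_s(-x)|^2=\bigl(|D_s(-x)|+N_s(-x)\bigr)\bigl(|D_s(-x)|-N_s(-x)\bigr)=4\Bigl(\sum_{j\text{ even}}d_jx^j\Bigr)\Bigl(\sum_{j\text{ odd}}d_jx^j\Bigr),
\end{equation*}
which immediately reduces the inequality $|r(-x)|<1$ to showing that both bracketed sums are strictly positive for every $x>0$. Since each is a polynomial in $x$ with non-negative coefficients, it is positive on $(0,\infty)$ as soon as it contains at least one strictly positive $d_j$. To check this I would inspect the closed form \cref{e:d_j}: the product $\prod_{k=0}^{s-j-1}\bigl((s-1)^2-k^2\bigr)/(2k+1)$ vanishes only when its range reaches the zero factor $k=s-1$, which happens only for the smallest values of $j$. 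Away from that boundary both products are strictly positive, so $d_j>0$ on a consecutive run of indices covering both parities as soon as $s\ge 3$. The residual case $s=2$ collapses to the trapezoidal rule, whose A-stability (and hence A$_0$-stability) is classical and can be read off directly from $r(z)=(1+z/2)/(1-z/2)$.

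The main obstacle I expect is the final step: pinning down exactly which $d_j$ vanish and which are strictly positive inside the closed formula is delicate, because the vanishing factor $(s-1)^2-(s-1)^2=0$ is hidden at the edge of some of the products and appears only for a short range of small $j$. Once that bookkeeping is rigorous, the factorization above converts A$_0$-stability into a one-line positivity check.
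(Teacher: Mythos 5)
Your proposal follows essentially the same route as the paper's own proof: substitute $z=-x$, invoke the nonnegativity of the $d_j$ from the corollary, and compare the even/odd partial sums against the full sum — the paper writes $|N_s(-x)|=\big||E|-|O|\big|$ versus $|D_s(-x)|=E+O$, while your factorization $|D_s(-x)|^2-|N_s(-x)|^2=4EO$ is just a repackaging of the same comparison. If anything, your version is slightly more careful than the paper's one-line conclusion, since you verify that the denominator cannot vanish and that both parity sums are strictly positive; be aware only that the closed form \cref{e:d_j} as printed has index typos (taken literally it gives $d_0=0$, whereas $d_0=M_s^{(s)}(1)=1$), though this bookkeeping issue does not break your argument that both parities carry a strictly positive coefficient for $s\ge 3$, with $s=2$ handled separately.
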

\begin{proof}
Suppose that $z = -x$ where $x\in \mathbb{R}$ and $x>0$,
\begin{equation}
\begin{split}
|N_S(z)| &= \bigg|\big|\sum_{j=0,j\text{ is even }}^{s-1} d_j x^j\big|-\big|\sum_{j=0,j\text{ is odd }}^{s-1} d_j x^j\big|\bigg|,\\
|D_s(z)|&=\bigg|\sum_{j=0}^{s-1} d_j x^j\bigg|.
\end{split}
\end{equation}
Therefore $|N_S(z)|<|D_s(z)|$ and hence $|r(z)|<1$ for all $z\in \mathbb{R}^{-}$. This completes the proof.
\end{proof}
Unlike A$_0$-stability, A-stability proof is not straightforward by just applying \cref{def:stability}. Alternatively, the following theorem can be used. 
\begin{theorem}
\label{thm:stability_collocation}
A Runge-Kutta method is $A$-stable if and only if
\begin{equation}
    |r(iy)|\leq1 \text{    for all real }y
\end{equation}
and
\begin{equation}
\label{e:secondcondition}
    r(z)\text{     is analytic for all $z$ in the left half plane}.
\end{equation}
\end{theorem}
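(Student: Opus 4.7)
The plan is to leverage the maximum modulus principle from complex analysis, combined with the fact that the stability function $r(z)=N_s(z)/D_s(z)$ of any Runge--Kutta method is a rational function of $z$. Rationality gives two useful facts for free: the only singularities of $r$ are its finitely many poles, and $|r(z)|$ grows at most polynomially as $|z|\to\infty$, so a Phragm\'en--Lindel\"of argument applies on the unbounded half-plane $\mathbb{C}^-$.

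For the forward direction, I would start from A-stability in the sense of Definition~\ref{def:stability}, i.e., $|r(z)|<1$ for every $z\in\mathbb{C}^-$. If $r$ had a pole $z_0\in\mathbb{C}^-$, then $|r|$ would blow up in a punctured neighbourhood of $z_0$, directly contradicting the bound; hence $r$ is analytic throughout $\mathbb{C}^-$, which is exactly condition~\eqref{e:secondcondition}. The same argument excludes poles on the imaginary axis, so $r$ extends continuously to $\overline{\mathbb{C}^-}$, and passing to the limit in the interior bound yields $|r(iy)|\leq 1$ for every real $y$.

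For the converse, I would assume that $r$ is analytic on $\mathbb{C}^-$ and that $|r(iy)|\leq 1$ on the imaginary axis. Because $r$ is rational, it is bounded in any neighbourhood of $\infty$, so Phragm\'en--Lindel\"of on the half-plane propagates the boundary bound inward, giving $|r(z)|\leq 1$ for every $z\in\overline{\mathbb{C}^-}$. To sharpen this to the strict inequality required by Definition~\ref{def:stability}, I would invoke the maximum modulus principle: equality $|r(z_0)|=1$ at some $z_0\in\mathbb{C}^-$ would force $r$ to be a unimodular constant, but $r(0)=1$ is automatic and $r\equiv 1$ would require $\mathbf{b}^T A^k\mathbf{1}=0$ for every $k\geq 0$, in particular $\sum b_j=0$, which rules out any consistent scheme.

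The main obstacle I anticipate is precisely the passage from the boundary bound to the interior bound on the unbounded domain $\mathbb{C}^-$: a naive appeal to maximum modulus fails on a non-compact region, and without growth control at infinity the conclusion need not hold. This is exactly where the rationality of $r$ does the work, supplying the polynomial growth bound that lets Phragm\'en--Lindel\"of close the gap with essentially no further calculation.
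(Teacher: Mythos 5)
The paper never proves this theorem: it is stated as a known result (the classical characterization of A-stability going back to Hairer and Wanner \cite{wanner1996solving}), so there is no in-paper argument to compare yours against, and your proposal must stand on its own. It essentially does, and it follows the standard route: a pole in the open left half-plane, or on the imaginary axis, would contradict $|r(z)|<1$ on $\mathbb{C}^-$, which gives the forward direction; Phragm\'en--Lindel\"of plus the maximum modulus principle gives the converse, and your exclusion of the constant case $r\equiv 1$ via $b^TA^k\mathbf{1}=0$ and $\sum_j b_j=1$ is a genuinely needed step here, because Definition \ref{def:stability} asks for strict inequality in the open half-plane whereas the textbook statement uses $\leq$. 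One clause does need repair: ``because $r$ is rational, it is bounded in any neighbourhood of $\infty$'' is false as stated --- for an explicit method $r$ is a nonconstant polynomial --- and boundedness of $r$ near $\infty$ only follows once you use the hypothesis $|r(iy)|\leq 1$ to force $\deg N_s\leq \deg D_s$; similarly, you should say explicitly that the boundary bound rules out poles on the imaginary axis, so that $r$ is continuous on the closed half-plane before Phragm\'en--Lindel\"of is invoked. Alternatively, the polynomial-growth form of Phragm\'en--Lindel\"of that you flag at the end covers the growth issue on its own, since every rational function grows at most polynomially outside a disk containing its poles; with that substitution and the boundary-pole remark made explicit, the argument is complete.
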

Then we can prove the following.
\begin{theorem}
The Clenshaw-Curtis type collocation method with a number of nodes less than or equal to 78 is A-stable.
\end{theorem}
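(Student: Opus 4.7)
The plan is to invoke Theorem \ref{thm:stability_collocation} and verify its two conditions using the explicit representation of the stability function from the preceding corollary. The key structural observation, which is immediate from the formulas for $N_S$ and $D_s$, is the symmetry
\begin{equation*}
D_s(z) = (-1)^s N_S(-z),
\end{equation*}
since $(-1)^{s-j}d_j = (-1)^s(-1)^j d_j$. Combined with the fact that the coefficients $d_j$ are real, this gives
\begin{equation*}
|D_s(iy)| = |N_S(-iy)| = |\overline{N_S(iy)}| = |N_S(iy)|
\end{equation*}
for every real $y$, so $|r(iy)| = 1$. This disposes of the first hypothesis.

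For the second hypothesis, the identity $r(z) = (-1)^s N_S(z)/N_S(-z)$ shows that $r$ is analytic in the open left half plane if and only if $N_S(w)$ has no zeros in the open right half plane, that is, precisely when $N_S$ is a Hurwitz polynomial. The corollary already records that every coefficient $d_j$ is strictly positive, which is the necessary (but not sufficient) leading condition for Hurwitz stability.

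I would then verify Hurwitz stability of $N_S$ for each $s \in \{1,2,\ldots,78\}$ by applying the Routh--Hurwitz criterion to the coefficients $d_0,\ldots,d_{s-1}$ given by the explicit formula in the corollary. Concretely, one constructs the Routh array of $N_S$ in exact arithmetic (the $d_j$ are rationals, so no floating-point roundoff is introduced) and checks that every entry in the first column is strictly positive; by the Routh--Hurwitz theorem this is equivalent to $N_S$ being Hurwitz, and hence, via the symmetry above, to $D_s$ being nonvanishing on the open left half plane.

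The main obstacle is the absence of a uniform closed-form argument: the proof is a case-by-case computational verification, and the specific cutoff $s \le 78$ is determined exactly by where the Routh--Hurwitz test first fails for some coefficient combination. Care must be taken with the conditioning of the Routh array for $s$ near $78$, where the magnitudes of the $d_j$ vary over many orders of magnitude; using exact (symbolic) arithmetic rather than finite-precision evaluation is therefore essential to make the verification rigorous.
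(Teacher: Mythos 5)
Your proposal is correct and follows essentially the same route as the paper: both arguments verify the two conditions of \cref{thm:stability_collocation}, dispose of the imaginary-axis condition through the coefficient symmetry between $N_S$ and $D_s$ (your version, keeping the factor $(-1)^s$, is in fact the more careful statement of the identity the paper writes as $D_s(z)=N_S(-z)$), and reduce A-stability to the requirement that all roots of $D_s$ lie in the open right half-plane, checked case by case for $s\le 78$. The only substantive difference is how that finite verification is certified: the paper computes the roots of $D_s$ numerically and exhibits a figure showing that the minimum real part of the roots stays positive up to $s=78$, whereas you propose an exact-arithmetic Routh--Hurwitz test on the rational coefficients $d_0,\dots,d_{s-1}$ of $N_S$. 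Your variant gives the stronger certificate --- it avoids floating-point rootfinding and reading a plot, which matters precisely in the regime you flag where the $d_j$ span many orders of magnitude --- but it is equally non-uniform in $s$: like the paper, it offers no structural explanation of the cutoff at $78$, only the observation of where the positivity test first fails.
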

\begin{proof}
The first condition in \cref{thm:stability_collocation} is satisfied since $D_s(z) = N_S(-z)$. The second condition \cref{e:secondcondition} is satisfied if and only if the roots of $D_S(z)$ are in the right half-plane. Define by $R_s$ the set of the real parts of $D_s$'s roots. Therefore, the $s$-node Clenshaw-Curtis collocation is A-stable if and only if $R_s$ is a positive set. \Cref{fig:stability} shows minimum value of the set $R_s$. From the figure, all the roots are in the right half-plane when $s\leq 78$, which completes the proof.  
\end{proof}
\begin{figure}[b]
  \centering
  \includegraphics[width=10cm]{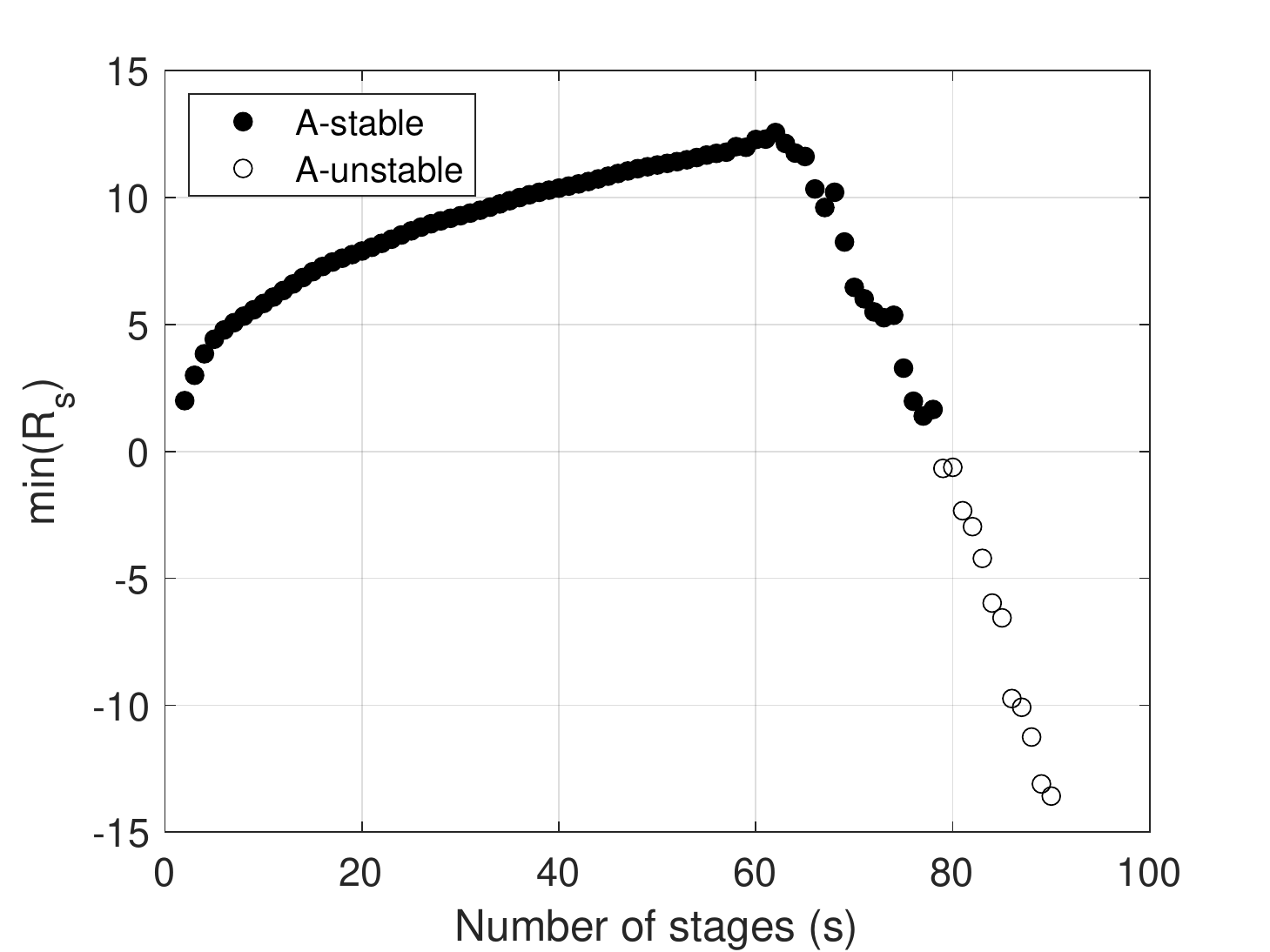}
  \caption{A-stability of Clenshaw-Curtis collocation}
  \label{fig:stability}
\end{figure}

\section{Accuracy of the methods} 
\label{sec:accuracy}
A Runge-Kutta method has order $p$ if for sufficiently smooth functions $
  ||y(t_0+h)-y_1 )||\leq Kh^{(p+1)}$, i.e., the Taylor series for $y(t_0+h)$ and $y_1$ coincide up to the term $h^p$. Therefore, the order of a Runge-Kutta method can be found by comparing the Taylor series expansion of the exact and the numerical solution. Fortunately, the order of collocation methods can be easily obtained in a more direct way.
\begin{theorem}[\cite{iserles2009first}]
\label{thm:accuracy}
  Let $M_s(t)$ be defined as in \cref{e:zeros01} and suppose that $M_s(t)$ is orthogonal to polynomials of degree $m-1$,
  \begin{equation}
  \begin{aligned}
      \int_{0}^1 M_s(\tau)\tau^j \text{d} \tau =0, \quad j=0,1,\hdots,m-1,
         \end{aligned}
  \end{equation}
  for some $m\in \{0,1,\hdots, s\}$. Then the collocation method is of order $s+m$. 
  \end{theorem}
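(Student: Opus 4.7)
My plan is to prove the theorem by combining a quadrature-order analysis of the Runge--Kutta weights with a perturbation bound on the collocation polynomial. First, consider the quadrature rule $Q[g]:=\sum_{i=1}^{s}b_{i}\,g(c_{i})$. Because $b_{i}=\int_{0}^{1}l_{i}(\tau)\,\text{d}\tau$, the quantity $Q[g]$ equals $\int_{0}^{1}p(\tau)\,\text{d}\tau$ where $p$ is the Lagrange interpolant of $g$ at the nodes $c_{1},\ldots,c_{s}$. Hence $Q$ is automatically exact on polynomials of degree at most $s-1$. I would then combine this with the orthogonality hypothesis on $M_{s}$ to lift the exactness to degree $s+m-1$: any polynomial $r$ of that degree admits the division $r=M_{s}\,g+R$ with $\deg g\le m-1$ and $\deg R\le s-1$, and
\[
\int_{0}^{1} r(\tau)\,\text{d}\tau \;=\; \int_{0}^{1} M_{s}(\tau)\,g(\tau)\,\text{d}\tau + \int_{0}^{1} R(\tau)\,\text{d}\tau \;=\; 0 + Q[R] \;=\; Q[r],
\]
since $M_{s}(c_{i})=0$. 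By the Peano kernel representation of the quadrature error, $\int_{0}^{1}g(\tau)\,\text{d}\tau-Q[g]=O(h^{s+m})$ whenever $g(\tau)=f(t_{n}+\tau h,y(t_{n}+\tau h))$ is sufficiently smooth.

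Next, I would introduce an auxiliary polynomial $v$ of degree $s$ defined by $v(t_{n})=y_{n}$ and $\dot v(t_{n}+c_{i}h)=f(t_{n}+c_{i}h,y(t_{n}+c_{i}h))$; that is, $v$ is what the collocation polynomial would be if the unknown nodal values were replaced by the exact solution. Integrating $\dot v$ over $[t_{n},t_{n}+h]$ yields $v(t_{n}+h)-y_{n}=h\,Q[\phi]$ with $\phi(\tau)=f(t_{n}+\tau h,y(t_{n}+\tau h))$, while $y(t_{n}+h)-y_{n}=h\int_{0}^{1}\phi(\tau)\,\text{d}\tau$. The enhanced quadrature estimate above therefore yields $y(t_{n}+h)-v(t_{n}+h)=O(h^{s+m+1})$.

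To close the argument I must bridge from $v(t_{n}+h)$ to the actual collocation iterate $u(t_{n}+h)=y_{n+1}$. The derivatives of $u$ and $v$ differ only through $f(t_{n}+c_{i}h,u(t_{n}+c_{i}h))-f(t_{n}+c_{i}h,y(t_{n}+c_{i}h))$, so the Lipschitz hypothesis on $f$ propagates nodal discrepancies into a bound on $u-v$ throughout $[t_{n},t_{n}+h]$. I would first establish the classical estimate $\|u-y\|_{\infty}=O(h^{s+1})$ by a contraction argument on the Volterra-type fixed-point problem that defines $u$, valid once $hL\|\mathbf{A}\|$ is below one, and then bootstrap: substituting the improved nodal bound back into the defining equations and re-invoking the orthogonality identity to gain one additional power of $h$ per iteration, eventually delivering $u(t_{n}+h)-y(t_{n}+h)=O(h^{s+m+1})$, which is precisely order $p=s+m$.

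The main obstacle is the final bootstrap. A single Lipschitz pass yields only $O(h^{s+2})$, which suffices when $m\le 1$ but falls short whenever the collocation nodes enjoy higher-order orthogonality (such as at Gauss--Legendre points, where $m=s$). The delicate step is extracting the higher cancellation from the implicit collocation system itself, either by successive Picard approximation in the Banach space of continuous functions on $[t_{n},t_{n}+h]$ or by showing that the nodal defects $u(t_{n}+c_{i}h)-y(t_{n}+c_{i}h)$ inherit the same orthogonality-driven cancellation as the quadrature error. Everything else is a routine combination of Peano-kernel estimates with standard interpolation-error formulas.
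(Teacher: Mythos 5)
The paper itself does not prove this theorem---it is quoted from the cited reference---so your attempt has to be measured against the standard argument (Iserles; Hairer--N{\o}rsett--Wanner). Your first two steps are fine: the exactness of the interpolatory quadrature $Q$ up to degree $s+m-1$ via the division $r=M_s g+R$ is exactly the right use of the orthogonality hypothesis, and the resulting estimate $y(t_n+h)-v(t_n+h)=O(h^{s+m+1})$ for the auxiliary polynomial $v$ built on exact nodal data is correct. The genuine gap is the bridge from $v$ to the actual collocation polynomial $u$, and it is not a technicality you can repair by the bootstrap you sketch. The internal errors $u(t_n+c_ih)-y(t_n+c_ih)$ are genuinely of size $O(h^{s+1})$ and do not improve under iteration (for Gauss points, $m=s$, the stages are only order $s$ accurate while the endpoint is order $2s$), so a Lipschitz pass stalls at $O(h^{s+2})$ and the claim of gaining ``one additional power of $h$ per iteration'' is false in general: the orthogonality of $M_s$ says nothing about the size of the nodal defects, and superconvergence is a property of the endpoint only.

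The missing idea in the classical proof is to stop comparing $u$ with an exact-data surrogate and instead represent the error directly through the defect $\delta(t)=\dot u(t)-f(t,u(t))$, which vanishes at the collocation points, via the nonlinear variation-of-constants (Alekseev--Gr\"obner) formula:
\begin{equation*}
  y_1-y(t_{n+1})=\int_{t_n}^{t_{n+1}}\frac{\partial y}{\partial y_0}\bigl(t_{n+1};t,u(t)\bigr)\,\delta(t)\,\text{d}t .
\end{equation*}
Applying the interpolatory quadrature at the nodes $c_i$ to this integral gives a quadrature sum that is identically zero (since $\delta(t_n+c_ih)=0$), so the whole local error equals the quadrature error, which by your exactness result and a Peano-kernel bound is $O(h^{s+m+1})$; this delivers order $s+m$ in one stroke, with no bootstrap. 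An equivalent algebraic route is through the simplifying assumptions: collocation gives $C(s)$, the orthogonality gives $B(s+m)$, these together imply $D(m)$ for distinct nodes, and Butcher's order theorem then yields order $s+m$. Without one of these two mechanisms your argument proves only order $\min(s+m,\,s+1)$, i.e.\ it covers $m\le 1$ but not the superconvergent cases that are the whole point of the theorem.
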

This theorem implies that any $s-$node collocation method is at least of order $p=s$. It also follows that the $s-$node Gauss-Legendre collocation method has order $p=2s$ since the Legendre polynomial of degree $s$ is orthogonal for all polynomials of degree $<s$.

For Clenshaw-Curtis collocation, employing \cref{thm:accuracy}, Vigo-Aguiar and Ramos \cite{vigo2007family} proved the following:
 \begin{corollary}
When $s$ is odd, the Clenshaw-Curtis collocation has order $s+1$. 
 \end{corollary}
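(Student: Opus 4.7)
The plan is to invoke the preceding accuracy theorem with $m=1$: to conclude the $s$-node Clenshaw--Curtis method has order $s+1$, it suffices to show $\int_0^1 M_s(\tau)\,\mathrm{d}\tau=0$ whenever $s$ is odd. Since the nodes $c_i$ are obtained from the Chebyshev points $\xi_i$ by $c_i=(1+\xi_i)/2$, substituting $\tau=(1+\xi)/2$ converts this into the requirement
\begin{equation*}
\int_{-1}^{1} q_s(\xi)\,\mathrm{d}\xi = 0,
\end{equation*}
where $q_s$ is the monic polynomial from \eqref{e:zeros-11}. Up to a nonzero constant, this reduces to proving $\int_{-1}^{1}(\xi^{2}-1)\,U_{s-2}(\xi)\,\mathrm{d}\xi=0$ when $s$ is odd.

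The next step is to use the trigonometric definition \eqref{e:trig}: set $\xi=\cos\theta$, so that $\mathrm{d}\xi=-\sin\theta\,\mathrm{d}\theta$, $\xi^{2}-1=-\sin^{2}\theta$, and $U_{s-2}(\xi)=\sin((s-1)\theta)/\sin\theta$. The integrand collapses to a purely trigonometric form and, after a standard change of limits, the problem becomes
\begin{equation*}
\int_{0}^{\pi} \sin^{2}\theta\,\sin\bigl((s-1)\theta\bigr)\,\mathrm{d}\theta = 0.
\end{equation*}
Applying $\sin^{2}\theta = \tfrac{1}{2}(1-\cos 2\theta)$ and the product-to-sum identity on $\cos(2\theta)\sin((s-1)\theta)$ expresses the integral as a fixed linear combination of $\int_{0}^{\pi}\sin(k\theta)\,\mathrm{d}\theta$ for the three values $k=s-1,\,s+1,\,s-3$.

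Since $\int_{0}^{\pi}\sin(k\theta)\,\mathrm{d}\theta=(1-\cos k\pi)/k$ vanishes precisely when $k$ is a nonzero even integer (and trivially when $k=0$), and all three values $s-1,\,s+1,\,s-3$ are even whenever $s$ is odd, every term drops out and the integral is zero. This establishes the orthogonality hypothesis of \cref{thm:accuracy} with $m=1$, yielding order $s+1$. I do not anticipate a real obstacle; the one thing to watch is the boundary case $s=3$, where $k=s-3=0$, but $\sin(0\cdot\theta)\equiv 0$ makes that term vanish automatically, so the argument is uniform over all odd $s\geq 3$. (For completeness one could also remark that when $s$ is even the same reduction gives $\int_{0}^{\pi}\sin((s-1)\theta)\,\mathrm{d}\theta=2/(s-1)\neq 0$, confirming that the order does not increase in that case and that the $m=1$ threshold is sharp.)
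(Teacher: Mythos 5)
Your argument is correct, and it follows essentially the route the paper itself indicates: the paper does not prove this corollary in-text but attributes it to Vigo-Aguiar and Ramos ``employing \cref{thm:accuracy}'', and your proof supplies precisely that verification, namely the $m=1$ orthogonality hypothesis $\int_0^1 M_s(\tau)\,\mathrm{d}\tau=0$ for odd $s$. The reduction via $\tau=(1+\xi)/2$ and \cref{e:zeros-11} to $\int_{-1}^{1}(\xi^2-1)U_{s-2}(\xi)\,\mathrm{d}\xi$, the substitution $\xi=\cos\theta$, and the evaluation through $\int_0^\pi\sin(k\theta)\,\mathrm{d}\theta$ with $k\in\{s-1,\,s+1,\,s-3\}$ all even are each sound, and the $s=3$ case ($k=0$) is handled correctly. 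Two minor remarks: the same vanishing follows even faster from symmetry, since for odd $s$ the transplanted Chebyshev points are symmetric about $\tau=1/2$ and $M_s(1-\tau)=-M_s(\tau)$, so the integral is zero by antisymmetry; and your closing parenthetical about even $s$ is slightly imprecise, because then all three terms $k=s-1,s+1,s-3$ are odd and contribute, their sum being a nonzero multiple of $-4/\bigl((s-1)(s+1)(s-3)\bigr)$ --- the conclusion you state is still true, but that aside is not needed for the corollary. Finally, as with the paper's formulation of \cref{thm:accuracy}, your argument establishes order at least $s+1$; pinning the order down exactly would additionally require $\int_0^1 M_s(\tau)\,\tau\,\mathrm{d}\tau\neq 0$, which the paper's theorem statement does not ask for.
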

Clenshaw-Curtis collocation has the lowest order any symmetric collocation method can have. The family of Netwon-Cotes collocation methods is one example (see the proof in \cite
{shampine1972stable}). Using the order of accuracy implies the following: higher-order collocation methods are more accurate than lower-order ones; accuracy improves with increasing the number of nodes for each family of collocation methods; and families of the same order are almost accurate. The two examples below demonstrate that these implications are not always realized.

 \begin{example}
 \begin{equation}
 \begin{aligned}
     \dot{y} = y,\quad y(0)=1
     \end{aligned}
 \end{equation}
with solution $
     y(t) = e^t$ and $t_f=1$.
 \end{example}
\begin{example}
\begin{equation}
\begin{aligned}
\dot{y}=2y/t^3,\quad y(0) = 1
\end{aligned}
\end{equation}
with solution $
    y(t) = e^{1-1/t^2}$ and $t_f=3$.
\end{example}

The integration errors for the two examples above when integrating using the Gauss-Legendre, Clenshaw-Curtis, and Newton-Cotes collocation methods are shown in \Cref{fig:GLvsCC}. We used one integration step in both examples, i.e., $h=t_f-t_0$. To solve the resulting nonlinear algebraic system, a fixed-point iteration is used with a tolerance of $10-14$ and a maximum number of iterations of 100.
The integration error is measured by the absolute difference between the exact solution at $t_f$ and its approximant $y_1$.

In both examples, the Newton-Cotes decrease integration errors geometrically before a critical number of nodes before increasing geometrically. We can also see that Clenshaw-Curtis outperforms Newton-Cotes even before reaching that critical number. This is more evident in the second example. For example, the 30-node Clenshaw-Curtis method has an error of order $10^{-15}$, while the error of the 30-node Newton-Cotes collocation is of order $10^{-10}$.  

Unlike Newton-Cotes collocation methods, both Gauss-Legendre and Clenshaw-Curtis collocation methods are convergent. In the first example, the convergence rate of the Clenshaw-Curtis collocation is half that of the Gauss-Legendre collocation, as expected from their orders of accuracy. In contrast, in the second example, Gauss-Legendre and Clenshaw-Curtis methods have the same convergence rate before a critical number of nodes. After that number, the Clenshaw-Curtis convergence rate is, as expected, half that of Gauss-Legendre. In general, Gauss-Legendre is never twice as accurate as Clenshaw-Curtis. 
 
\begin{figure}[htbp]
  \centering
  	\subfigure
	[Example 1]
		{\includegraphics[width=6.3cm]{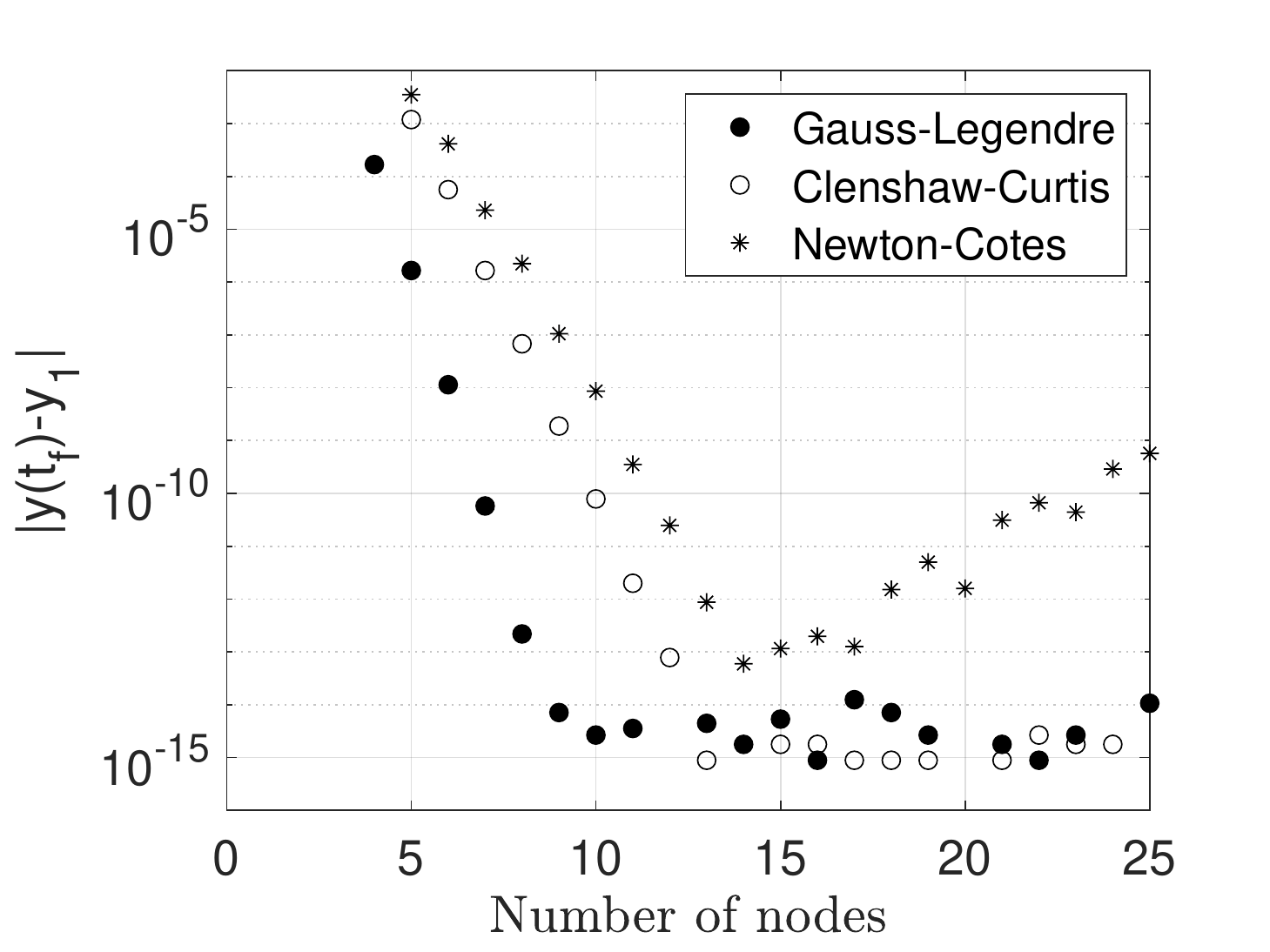}}
		\hfill
		\subfigure[Example 2]
		{\includegraphics[width=6.3cm]{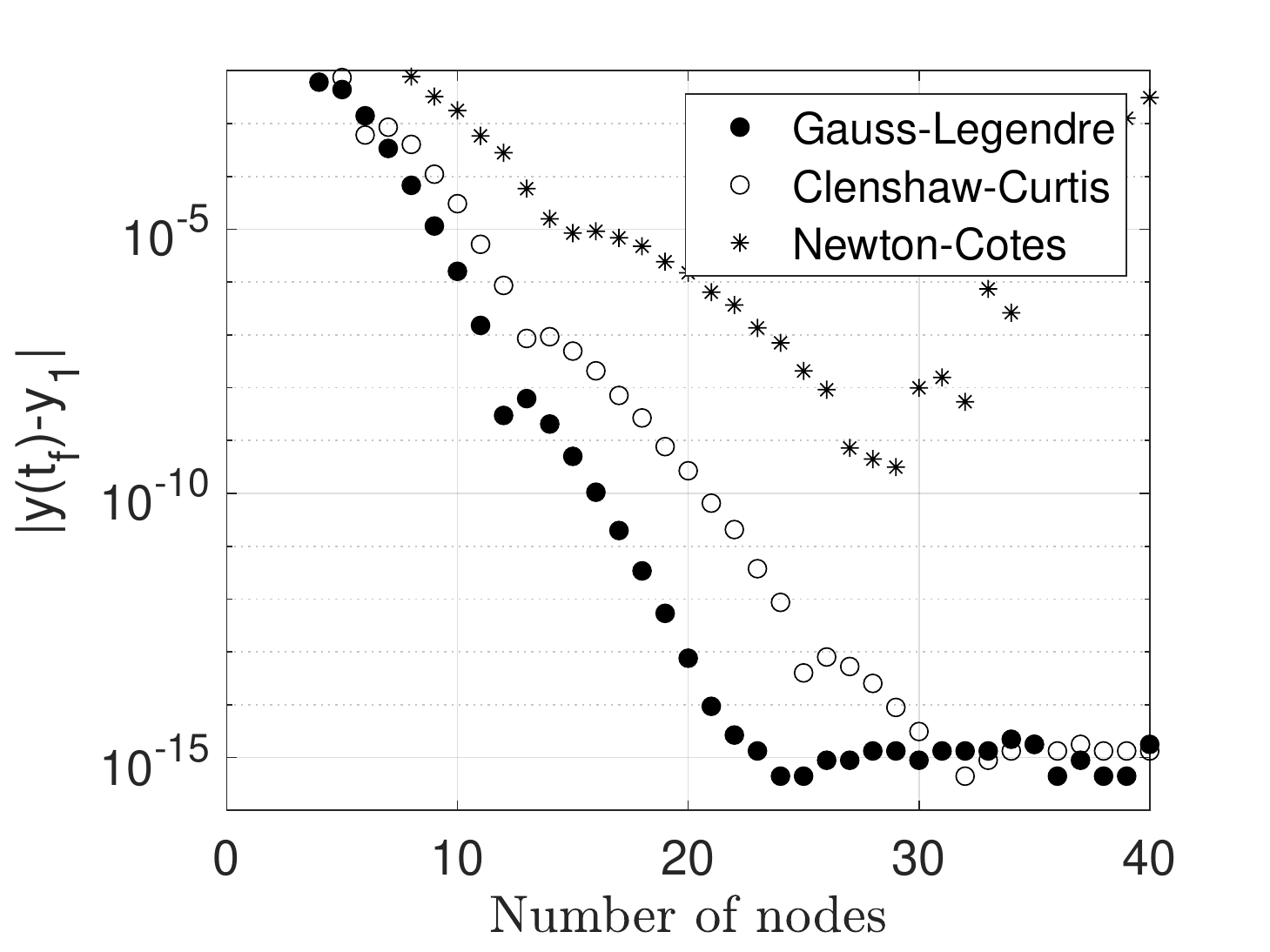}\label{fig:GLvsCC2}}
		\hfill
  \caption{Accuracy of Gauss-Legendre, Clenshaw-Curtis, and Newton-Cotes collocation methods for Examples 1 and 2. }
  \label{fig:GLvsCC}
\end{figure}

The order of accuracy is a widely accepted metric for the accuracy of numerical integration methods. For instance, Shampine and Watts \cite{shampine1972stable} stated that Netwon-Cotes methods have arbitrarily high orders of accuracy. We now know that these orders are only realized for a limited range of nodes, even for a simple function such as $\dot{y} = y$. What does the order of accuracy mean for collocation methods? If a collocation method has as an order $p$ then the corresponding quadrature method is exact for all polynomials of degrees $<p$. Therefore, the failure of the order to predict the actual accuracy is not surprising but is a direct consequence of the failure of the exactness principle for quadrature methods. We refer the reader to Ref. \cite{trefethen2022exactness} that the failure of this principle
in predicting the actual behavior of quadrature methods in several cases, including including Newton-Cotes and Clenshaw-Curtis quadrature. In Ref. \cite{atallah2022comparative}, we also present more examples that demonstrate the same accuracy of the Gauss-Legendre and Clenshaw-Curtis methods when solving gravity-perturbed motion for objects in the vicinity of the Earth.  


\section{Conclusions}
\label{sec:conclusions}
Gauss-Legendre collocation is the Runge-Kutta family of methods with the highest orders of accuracy, and they are also A-stable for all $s>0$. The family Clenshaw-Curtis collocation methods, on the other hand, have several properties that make them more appropriate for practical implementations. Although they are expected to be half as accurate as Gauss-Legendre ones, numerical examples indicate that both families may be equally accurate. Furthermore, our investigation into the stability of these methods reveals that they are A-stable when $2\leq s \leq 78$, making them suitable for stiff problems.



\bibliographystyle{plain}

\bibliography{references2}

\end{document}